\newtheorem{theorem}{Theorem}[section]
\newtheorem{lemma}[theorem]{Lemma}
\theoremstyle{definition}
\theoremstyle{remark}
\numberwithin{equation}{section}
\begin{document}
\begin{center}
{\Large \bf  The logarithmic Minkowski inequality for cylinders}
\end{center}

\vskip 20pt

%
%
%
%
%
%
%
%
%
%
%
%

\begin{center}
{{\bf Jiangyan Tao$^{1}$,~~ Ge Xiong$^{2}$~~ and Jiawei Xiong$^{3}$}\\~~ \\ 1. 2. School of Mathematical Sciences, Tongji University, Shanghai, 200092, P. R.  China
\\ 3. School of Mathematics and Statistics, Ningbo University, Ningbo, 315211, P. R.  China}
\end{center}

\vskip 10pt

\footnotetext{E-mail address: 1. 2010534@tongji.edu.cn; 2. xiongge@tongji.edu.cn; 3. xiongjiawei@nbu.edu.cn.}
\footnotetext{Research of the authors was supported by NSFC No. 12271407.}

\begin{center}
\begin{minipage}{14cm}
{\bf Abstract} In this paper, we prove that if $K$ is an $o$-symmetric cylinder and $L$ is an $o$-symmetric convex body in $\mathbb R^3$, then the logarithmic Minkowski inequality
\[\frac{1}{V(K)}\int_{\mathbb S^{2}}\log\frac{h_L}{h_K}\,dV_K\geq\frac{1}{3}\log\frac{V(L)}{V(K)}\]
holds, with  equality  if and only if $K$ and $L$ are relative cylinders.

\vskip 5pt{{\bf 2020 Mathematics Subject Classification:} 28A75, 52A40, 49Q15.}

\vskip 5pt{{\bf Keywords:} logarithmic Minkowski inequality; cylinder; cone-volume measure}

\vskip 5pt
\end{minipage}
\end{center}

\vskip 30pt
\section{Introduction}

The classical Brunn-Minkowski inequality is one of the core results within the Brunn-Minkowski theory (also called the mixed volume theory), which reads as follows: If $K$ and $L$ are convex bodies (compact convex sets with nonempty interiors) in Euclidean $n$-space $\mathbb R^n$ and $\lambda \in (0,1)$, then
\begin{equation}\label{BMIneq}
V\big((1-\lambda) K+\lambda L\big)^{\frac{1}{n}}\geq (1-\lambda) V(K)^{\frac{1}{n}}+\lambda V(L)^{\frac{1}{n}},
\end{equation}
with equality if and only if $K$ and $L$ are homothetic  (i.e., they coincide up to a translation and
a dilatate). Here, $K+L=\{x+y: x\in K, y\in L\}$ is the \emph{Minkowski sum} of convex bodies $K$ and $L;$ and $V$ denotes the  volume, i.e., $n$-dimensional Lebesgue measure. Because of the homogeneity of the Lebesgue measure, (\ref{BMIneq}) is equivalent to say that if $\lambda\in (0,1)$, then
\begin{equation}
\label{BMIneq1}
V\big((1-\lambda) K+\lambda L\big)\geq V(K)^{1-\lambda}V(L)^{\lambda},
\end{equation}
with equality if and only if $K$ and $L$ are translates.

The Brunn-Minkowski inequality was actually inspired by issues around the isoperimetric problem and was for a long time considered to belong to geometry,
where its significance was widely recognized. For example, it implies the clear fact that the function that gives the volumes of parallel hyperplane sections
of a convex body is unimodal. The fundamental geometric content of the Brunn-Minkowski inequality makes it a cornerstone of the Brunn-Minkowski theory, a
beautiful and powerful apparatus for conquering all sorts of problems involving
metric quantities such as volume and surface area.

If $h_{K}$ and $h_{L}$ are the support functions of convex bodies $K$ and $L$ (see their definitions in Section \ref{section2}), the Minkowski
combination $(1-\lambda)K+\lambda L$ also can be  expressed as an intersection of half-spaces,
$$(1-\lambda)K+\lambda L=\bigcap_{u\in \mathbb{S}^{n-1}}\left\{x \in \mathbb{R}^{n}: x \cdot u\leq (1-\lambda)h_{K}(u)+\lambda h_{L}(u)\right\},$$
where $x\cdot u$ denotes the standard inner product of $x$ and $u$ in $\mathbb{R}^{n}.$

In the early 1960s, Firey \cite{Firey} (see also  Schneider \cite[Section 9.1]{Schneider}) generalized the Minkowski combination of convex bodies to the $L_{p}$ Minkowski combination for each $p\geq 1$. In the 1990s, Lutwak \cite{Lutwak1, Lutwak2} showed that many classical results can be extended to the $L_{p}$ Brunn-Minkowski-Firey theory. If $K$ and $L$ are convex bodies in $\mathbb{R}^{n}$ containing the origin in their interiors, $p\in (1, \infty)$ and $\lambda \in (0, 1)$, then
\begin{equation}\label{b}
V\big((1-\lambda)\cdot_{p} K+_{p}\lambda\cdot_{p} L\big)^{\frac{p}{n}}\geq (1-\lambda) V(K)^{\frac{p}{n}}+\lambda V(L)^{\frac{p}{n}},
\end{equation}
with quality if and only if $K$ and $L$ are dilatates. Here $(1-\lambda)\cdot_{p} K=(1-\lambda)^{\frac{1}{p}}K,$ $\lambda\cdot_{p} L=\lambda^{\frac{1}{p}}L.$
The $L_{p}$ combination $(1-\lambda)\cdot_{p} K+_{p}\lambda \cdot_{p} L$ is defined by
\begin{align}\label{Lp sum}
(1-\lambda)\cdot_{p} K+_{p}\lambda\cdot_{p}L
=\bigcap_{u\in \mathbb{S}^{n-1}}\left\{x \in \mathbb{R}^{n}: x \cdot u\leq \big((1-\lambda)h_{K}(u)^{p}+\lambda h_{L}(u)^{p}\big)^{\frac{1}{p}}\right\}.
\end{align}

The $L_{p}$ Brunn-Minkowski inequality (\ref{b}) has an equivalent form: If $p>1,$ then
$$V\big((1-\lambda)\cdot_{p} K+_{p}\lambda \cdot_{p} L\big)\geq V(K)^{1-\lambda}V(L)^{\lambda},$$
with equality if and only if $K=L$. A unified approach used to generalize classical Brunn-Minkowski
type inequalities to $L_p$ Brunn-Minkowski type inequalities, called
the $L_p$ transference principle, is refined in the paper \cite{Zou-Xiong}.

The definition (\ref{Lp sum}) actually makes sense for all $p>0.$ The case where $p=0$ is the limiting case given by (\ref{Lp sum}) and is represented as
\begin{align}
(1-\lambda)\cdot K+_{0}\lambda\cdot L
=\bigcap_{u\in \mathbb{S}^{n-1}}\left\{x \in \mathbb{R}^{n}: x \cdot u\leq h_{K}(u)^{1-\lambda}h_{L}(u)^{\lambda}\right\},
\end{align}
which is called the logarithmic Minkowski combination of convex bodies $K$ and $L$. The most significance conjecture on the logarithmic Minkowski combination is the \emph{logarithmic Brunn-Minkowski inequality}.

B\"{o}r\"{o}czky, Lutwak, Yang and Zhang \cite{BLYZ1} initially posed the logarithmic Brunn-Minkowski conjecture: If $K$ and $L$ are $o$-symmetric convex bodies in $\mathbb R^n$, then the inequality
$$V\big((1-\lambda)\cdot K+_{0}\lambda\cdot L\big)\geq V(K)^{1-\lambda}V(L)^{\lambda}$$
holds.
The logarithmic Brunn-Minkowski inequality is stronger than the classical Brunn-Minkowski inequality (\ref{BMIneq}) and has an equivalent form, which is called the \emph{logarithmic Minkowski inequality} (see \cite{BLYZ})
\begin{equation}\label{logM}
\frac{1}{V(K)}\int_{\mathbb{S}^{n-1}}\log \frac{h_{L}}{h_{K}}dV_{K}\geq \frac{1}{n}\log \frac{V(L)}{V(K)}.
\end{equation}
Here, $dV_K=\frac{1}{n}h_K\,dS_K$ denotes the cone-volume measure of $K$, where $S_K$ is the surface area measure of $K$. For the definitions of the cone-volume measure and the surface area measure, see Section \ref{section2}.

In 2012, B\"{o}r\"{o}czky, Lutwak, Yang and Zhang \cite{BLYZ1} showed the equivalence and  established the planar logarithmic  Brunn-Minkowski inequality when $K$ and $L$ are $o$-symmetric convex bodies in the plane.

Turning to higher dimensions, besides the cases of unconditional convex bodies by Saroglou \cite{Saroglou}  and complex bodies by Rotem \cite{Rotem}, the conjecture was proved by Kolesnikov and Milman \cite{K-Milman},
when $K$ is close to be an ellipsoid in the sense of Hausdorff metric by a combination of the
local estimates. By using the continuity method, Chen, Huang, Li, and Liu \cite[Corollary 1.1]{CHLL} proved  the conjecture   when $K$ and $L$ are $o$-symmetric convex bodies, and $K$ is in a small $C^0$ neighborhood of the unit ball. In \cite{Putterman}, Putterman  gave a proof of the  equivalence of the inequality (\ref{logM}) to the local version of the inequality studied by Colesanti, Livshyts, and Marsiglietti \cite{Colesanti1} and by Kolesnikov and Milman \cite{K-Milman}. The local form of the logarithmic
Brunn-Minkowski conjecture  for zonoids was established by  van Handel \cite{zonoid}, where a variant
of the Bochner method is used in the proof. For more progress, see \cite{BMRSS, BD,Colesanti,HKL,PW,Xiong}.

 Write $\mathcal{K}_{os}^n$ for the set of $o$-symmetric convex bodies  in $\mathbb{R}^n$.
$K$ is called a \emph{cylinder} in $\mathbb R^n$, if there exist convex sets $K_i,i=1,\ldots,m,1<m\leq n$, with $\dim K_i\geq 1$ and $\sum_{i=1}^m\dim K_i=n$, such that $K=\sum_{i=1}^mK_i$. We call convex bodies $K$ and $L$ in $\mathbb R^n$  are \emph{relative cylinders}, if $K$ and $L$ are cylinders with $K=\sum_{i=1}^mK_i$ and $L=\sum_{i=1}^mL_i$, such that $K_i$ and  $L_i$ are dilatates, $i=1,\ldots,m$.

We prove the following results in this article.

\begin{theorem}
Suppose that $K,L\in\mathcal{K}^3_{os}$ and $K$ is a cylinder. Then
\[\frac{1}{V(K)}\int_{\mathbb S^2}\log\frac{h_L}{h_K}\,dV_K\geq\frac{1}{3}\log\frac{V(L)}{V(K)},\]
with equality if and only if $K$ and $L$ are relative cylinders.
\end{theorem}

\begin{theorem}\label{a}
For any $n\geq 1$. The following assertions are equivalent.

(1) If $K,L\in\mathcal{K}^n_{os}$, then
$$\frac{1}{V(K)}\int_{\mathbb S^{n-1}}\log\frac{h_L}{h_K}\,d{V}_K\geq\frac{1}{n}\log\frac{V(L)}{V(K)},$$
with equality if and only if $K$ and $L$ are dilatates,
or $K$ and $L$ are relative cylinders.

(2) If $K,L\in\mathcal{K}^n_{os}$ and $V_K=V_L$, then
$K=L$, or $K$ and $L$ are relative cylinders.
\end{theorem}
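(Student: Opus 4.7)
My plan is to handle the two directions separately: $(1)\Rightarrow(2)$ is a quick two-sided application of the inequality, while $(2)\Rightarrow(1)$ requires a variational argument that turns the uniqueness hypothesis on cone-volume measures into the inequality itself.

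For $(1)\Rightarrow(2)$, take $K,L\in\mathcal{K}^n_{os}$ with $V_K=V_L$. Equating total masses gives $V(K)=V(L)$, so applying (1) both to the pair $(K,L)$ and to the pair $(L,K)$ produces
\[
\int_{\mathbb S^{n-1}}\log\frac{h_L}{h_K}\,dV_K\ge 0,\qquad \int_{\mathbb S^{n-1}}\log\frac{h_K}{h_L}\,dV_L\ge 0.
\]
Because $V_K=V_L$, the two integrals are negatives of each other, so both vanish. The equality case in (1) then forces $K$ and $L$ to be dilatates (hence equal, by $V(K)=V(L)$) or relative cylinders, which is exactly (2).

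For $(2)\Rightarrow(1)$, fix $K$ and work with the scale-invariant functional
\[
\Phi_K(M)=\frac{1}{V(K)}\int_{\mathbb S^{n-1}}\log\frac{h_M}{h_K}\,dV_K-\frac{1}{n}\log\frac{V(M)}{V(K)},\qquad M\in\mathcal{K}^n_{os}.
\]
Since $\Phi_K(cM)=\Phi_K(M)$ for every $c>0$, it suffices to minimize $M\mapsto\int\log h_M\,dV_K$ on the slice $\{V(M)=V(K)\}$. The first step is existence of a minimizer: the subspace concentration inequality of B\"or\"oczky--Lutwak--Yang--Zhang for cone-volume measures of $o$-symmetric convex bodies controls $\int\log h_M\,dV_K$ from below, preventing a minimizing sequence from collapsing into a proper linear subspace, so Blaschke selection yields $M^{*}\in\mathcal{K}^n_{os}$ with $V(M^{*})=V(K)$. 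Computing the first variation along a logarithmic Wulff perturbation $h_{M_t}=h_{M^{*}}e^{tf}$ with even continuous $f$ on $\mathbb S^{n-1}$, via the standard formulas
\[
\left.\frac{d}{dt}\right|_{t=0}\int_{\mathbb S^{n-1}}\log h_{M_t}\,dV_K=\int_{\mathbb S^{n-1}}f\,dV_K,\qquad \left.\frac{d}{dt}\right|_{t=0}V(M_t)=n\int_{\mathbb S^{n-1}}f\,dV_{M^{*}},
\]
the Lagrange stationarity condition forces $V_K$ and $V_{M^{*}}$ to be proportional; testing $f\equiv 1$ together with $V(M^{*})=V(K)$ fixes the constant and gives $V_K=V_{M^{*}}$.

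Hypothesis (2) then yields either $M^{*}=K$ or that $K$ and $M^{*}$ are relative cylinders, and in both cases a direct bookkeeping of the cone-volume measure on Minkowski sums of lower-dimensional factors shows $\Phi_K(M^{*})=0$. Hence $\Phi_K\ge 0$ throughout $\mathcal{K}^n_{os}$, which is the inequality in (1), and tracing the equality case through the minimization recovers the stated characterization. The main obstacle will be the existence step: the functional $\int\log h_M\,dV_K$ on the slice is unbounded below along degenerating $M$, so the subspace concentration of $V_K$ is indispensable for extracting a genuine, non-degenerate limit rather than one collapsed into a proper subspace.
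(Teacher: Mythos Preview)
Your direction $(1)\Rightarrow(2)$ is correct and essentially identical to the paper's argument.

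For $(2)\Rightarrow(1)$ there is a genuine gap in the existence step, precisely when $K$ is itself a cylinder. If $K=K_1+K_2$ with $K_i\subseteq\xi_i$ and $\dim\xi_i=k_i$, then the cone-volume measure satisfies the subspace concentration \emph{equality} $V_K(\xi_1\cap\mathbb S^{n-1})=\frac{k_1}{n}V(K)$, not the strict inequality. In this situation your compactness argument breaks down: the family $M_t=tK_1+t^{-k_1/k_2}K_2$ has $V(M_t)=V(K)$ and $\Phi_K(M_t)=0$ for every $t>0$, yet $M_t$ degenerates as $t\to 0$ or $t\to\infty$. So a minimizing sequence need not admit a subsequential limit in $\mathcal K^n_{os}$, and Blaschke selection cannot deliver the minimizer $M^{*}$ you need. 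The existence result you are implicitly invoking (Theorem~6.3 of B\"or\"oczky--Lutwak--Yang--Zhang) requires the \emph{strict} subspace concentration inequality, which holds exactly when $K$ is not a cylinder.

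The paper closes this gap by an induction on $n$, splitting into two cases. When $K$ is not a cylinder it runs your variational argument verbatim (this is its Case~2). When $K=K_1+K_2$ is a cylinder (Case~1), it first shows that assertion~(2) in dimension $n$ forces assertion~(2) in each factor subspace $\xi_i$, by lifting lower-dimensional bodies $M_i,N_i$ with $V_{M_i}=V_{N_i}$ to the cylinders $M_1+M_2$ and $N_1+N_2$ in $\mathbb R^n$ and applying~(2) there. The inductive hypothesis then yields the log-Minkowski inequality inside each $\xi_i$, and combining these via the decomposition $dV_K=\frac{k_1}{n}|K_2|\,dV_{K_1}+\frac{k_2}{n}|K_1|\,dV_{K_2}$ together with $L\subseteq L|_{\xi_1}+L|_{\xi_2}$ gives the $n$-dimensional inequality directly, with the equality analysis following from the lower-dimensional equality cases. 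Your ``direct bookkeeping'' showing $\Phi_K(M^{*})=0$ for relative cylinders is correct, but it does not substitute for this inductive treatment of the cylinder case.
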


This article is organized as follows. Notations and necessary facts from the Brunn-Minkowski theory are listed in Section \ref{section2}. To prove Theorem \ref{a}, some critical lemmas are provided in Section \ref{section3}. The main results are shown in Section \ref{section4}.

\section{Preliminaries}\label{section2}
For quick reference, we collect some basic facts on convex bodies. Good references are the books by Gardner \cite{Gardner3}, Gruber \cite{Gruber} and Schneider \cite{Schneider}.

Let $\mathbb{S}^{n-1}$ be the unit sphere of $\mathbb R^n$.  Write $\mathcal{K}^n$ for the set of convex bodies in $\mathbb{R}^n$.  Let $\mathcal{K}^n_o\subseteq \mathcal{K}^n$ be the set of convex bodies with the origin $o$ in their interiors, and  $\mathcal{K}_{os}^n\subseteq \mathcal{K}^n_o$ be the set of $o$-symmetric convex bodies.

Write $\mathrm{int}\,K$  and $\mathrm{bd}\,K$  for the \emph{interior} and \emph{boundary} of a set $K$, respectively. Write $\mathrm{relint}\,K$ and $\mathrm{relbd}\,K$ for the \emph{relative interior} and \emph{relative boundary} of $K$, that is, the interior and boundary of $K$ relative to its affine hull, respectively.

The \emph{support function} $h_{K}: \mathbb R^n \to \mathbb R$ of  convex set $K$ is defined, for $x \in \mathbb R^n$, by $$h_K(x)=\max \{x\cdot y: y\in K\}.$$
From the definition, it follows immediately that, for $T \in \mathrm{GL}(n)$, the support function of $T K=\{T x: x \in K\}$ is given by
\begin{equation}\label{supportfunction}
h_{T K}(x)=h_K(T^t x).
\end{equation}

Denote by $C(\mathbb{S}^{n-1})$ the set of continuous functions defined on $\mathbb{S}^{n-1}$, which is equipped with the metric induced by the maximal norm. Write $C^{+}(\mathbb{S}^{n-1})$ for the set of strictly positive functions in $C(\mathbb{S}^{n-1})$.  Write $C_{e}(\mathbb{S}^{n-1})$ for the set of even functions in $C(\mathbb{S}^{n-1})$. Write $C_{e}^{+}(\mathbb{S}^{n-1})$ for the set of strictly positive even functions in $C(\mathbb{S}^{n-1})$.

For nonnegative $f\in C(\mathbb{S}^{n-1})$, define
 $$[f]=\bigcap\limits_{ u\in \mathbb{S}^{n-1}}\left\{x\in\mathbb{R}^n:x\cdot  u\leqslant f(u)\right\}.$$
 The set is called the \emph{Aleksandrov body} (also known as the \emph{Wulff shape}) of $f$. Obviously, $[f]$ is a compact convex set containing the origin. For a compact convex set containing the origin, say $K$, we have $K=[h_K]$. If $f\in C^{+}(\mathbb{S}^{n-1})$, then $[f]\in \mathcal{K}^n_o$.

The \emph{Aleksandrov convergence lemma} reads: If the sequence $\big\{f_j\big\}_j\subseteq C^{+}(\mathbb{S}^{n-1})$ converges uniformly to $f\in C^{+}(\mathbb{S}^{n-1})$, then $\lim_{j\rightarrow\infty}[f_j]=[f]$.

Denote by $V(K)$ the volume of convex body $K$ in $\mathbb R^n$. If $K$ is a lower dimensional convex set in $\mathbb R^n$, we write $|K|$ for the Hausdorff measure of $K$.

Write $K|_{\xi}$ for the image of orthogonal projection of $K$ onto the subspace $\xi$   of $\mathbb R^n$.

Let $K \in \mathcal{K}^n$. The \emph{surface area measure} $S_{K}$ of $K$ is a Borel measure on $\mathbb{S}^{n-1}$ defined for a Borel set $\omega \subseteq \mathbb{S}^{n-1}$ by
$$S_{K}(\omega)=\mathcal{H}^{n-1}\big(\nu_{K}^{-1}(\omega)\big),$$
where $\nu_{K}:\partial^{\prime}K \rightarrow \mathbb{S}^{n-1}$ is the Gauss map of $K$, defined on $\partial^{\prime}K$, the set of points of $\partial K$ that have a unique outer unit normal, and $\mathcal{H}^{n-1}$ is the $(n-1)$-dimensional Hausdorff measure.

Let $K \in \mathcal{K}^n_{o}$. Its \emph{cone-volume measure} $V_{K}$ is a Borel measure on $\mathbb{S}^{n-1}$ defined for a Borel set $\omega \subseteq \mathbb{S}^{n-1}$  by
$V_{K}(\omega)=\frac{1}{n}\int_{x\in \nu_{K}^{-1}(\omega)}x\cdot \nu_{K}(x)d\mathcal{H}^{n-1}(x)$.
Thus,  $V_{K}(\cdot)=\frac{1}{n}h_{K}S_{K}(\cdot).$

For $f\in C(\mathbb S^{n-1})$ and the general linear transformation $T\in\mathrm{GL}(n)$, it follows that
\begin{equation}\label{surface area measure}
\int_{\mathbb S^{n-1}}f(u)\,dS_{TK}(u)=|\det T|\int_{\mathbb S^{n-1}}f\big(\langle T^{-t}u\rangle\big)|T^{-t}u|\,dS_{K}(u).
\end{equation}
Here,  $\langle T^{-t}u\rangle=T^{-t}u/|T^{-t}u|$ denotes the unit vector of $T^{-t}u$.

The following form of  Aleksandrov's lemma (\cite[Theorem 7.5.3]{Schneider}) and the logarithmic Minkowski inequality in the plane \cite[Theroem 1.4]{BLYZ1}  will be needed.
\begin{lemma}\label{variational formula}
Suppose $I\subseteq\mathbb R$ is an open interval containing $0$ and that the function $h_t=h(t,u):I\times\mathbb S^{n-1}\to (0,\infty)$ is continuous. If, as $t\to 0$, the convergence in
$$\frac{h_t-h_0}{t}\to f=\frac{\partial h_t}{\partial t}\bigg|_{t=0},$$
is uniform on $\mathbb S^{n-1}$, and if $K_t$ denotes the Wulff shape of $h_t$, then
$$\lim_{t\to 0}\frac{V(K_t)-V(K_0)}{t}=\int_{\mathbb S^{n-1}}f\,dS_{K_0}.$$
\end{lemma}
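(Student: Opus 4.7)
I follow the proof of Aleksandrov's variational formula (Schneider, \emph{Convex Bodies}, Theorem 7.5.3), which is cited in the paper. The strategy is to establish a uniform first-variation estimate $(\star)$: for every $\varepsilon>0$ there is $\delta>0$ such that for all $h\in C^+(\mathbb{S}^{n-1})$ with $\|h-h_0\|_\infty<\delta$,
\[
\Bigl|V([h])-V(K_0)-\int_{\mathbb{S}^{n-1}}(h-h_0)\,dS_{K_0}\Bigr|\leq\varepsilon\|h-h_0\|_\infty. \qquad (\star)
\]
Given $(\star)$, substituting $h=h_t$ and dividing by $t$ yields the claim: since $\|h_t-h_0\|_\infty=O(t)$ and $(h_t-h_0)/t\to f$ uniformly, the right-hand side divided by $|t|$ is $\varepsilon\,O(1)$, and letting $t\to 0$ then $\varepsilon\to 0$ gives the derivative formula. (Implicit in the lemma's phrasing is $h_0=h_{K_0}$, which ensures $\int h_0\,dS_{K_0}=nV(K_0)$; without this identification the formula can fail on directions outside the Gauss image of $K_0$.)

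\textbf{Step 1.} By the Aleksandrov convergence lemma, uniform closeness $h\to h_0$ gives $L:=[h]\to K_0$ in the Hausdorff metric, so $h_L\to h_0$ uniformly and $S_L\to S_{K_0}$ weakly on $\mathbb{S}^{n-1}$.

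\textbf{Step 2.} To prove $(\star)$ I apply Minkowski's first inequality for mixed volumes in both forms. From $nV_1(K_0,L)=\int h_L\,dS_{K_0}\leq\int h\,dS_{K_0}$ (using $h_L\leq h$) combined with $V_1(K_0,L)^n\geq V(K_0)^{n-1}V(L)$,
\[
V(L)\leq V(K_0)\Bigl(1+\tfrac{1}{nV(K_0)}\int(h-h_0)\,dS_{K_0}\Bigr)^n,
\]
which Taylor-expands to the upper half of $(\star)$ with error $O(\|h-h_0\|_\infty^{2})$. For the matching lower bound, apply $nV_1(L,K_0)=\int h_0\,dS_L\geq nV(L)^{(n-1)/n}V(K_0)^{1/n}$, decompose $\int h_0\,dS_L=nV(L)+\int(h_0-h_L)\,dS_L$, and use $h_L\leq h$ together with quantitative Hausdorff continuity of the surface area measure to transfer the residual integral from $dS_L$ to $dS_{K_0}$ with quadratic error in $\|h-h_0\|_\infty$. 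This yields the matching lower estimate and closes $(\star)$.

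\textbf{Main obstacle.} The delicate step is the lower bound in Step~2: Minkowski's inequality in either direction naturally produces only upper bounds on a volume, so extracting a lower bound on $V(L)-V(K_0)$ with the correct first-order rate requires combining \emph{both} forms with a quantitative modulus of weak convergence $S_L\to S_{K_0}$ and the uniform closeness $\|h_L-h_0\|_\infty=O(\|h-h_0\|_\infty)$. One must ensure that the transfer error from $dS_L$ to $dS_{K_0}$ is $O(\|h-h_0\|_\infty^{2})$ and not merely $O(\|h-h_0\|_\infty)$, since only a quadratic remainder survives the division by $t$ in the reduction to the derivative; the full technical treatment is Schneider's proof of Theorem~7.5.1 underlying Theorem~7.5.3.
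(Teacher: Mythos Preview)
The paper does not give its own proof of this lemma; it is simply stated and attributed to Schneider, \emph{Convex Bodies}, Theorem~7.5.3. So there is no paper-proof to compare against.

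Your outline does track the standard argument (Minkowski's first inequality applied in both directions), but two points deserve correction. First, the parenthetical worry that one must assume $h_0=h_{K_0}$ is misplaced: the identity $\int_{\mathbb{S}^{n-1}} h_0\,dS_{K_0}=nV(K_0)$ holds for any Wulff shape $K_0=[h_0]$ because $h_{K_0}=h_0$ holds $S_{K_0}$-almost everywhere (this is Schneider's Lemma~7.5.1). The same fact with $t$ in place of $0$, namely $h_{K_t}=h_t$ $S_{K_t}$-a.e., is precisely what makes your lower-bound step work, and you never invoke it explicitly; without it the decomposition $\int h_0\,dS_L = nV(L)+\int(h_0-h_L)\,dS_L$ does not connect to $\int(h_0-h)\,dS_L$ in a useful way.

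Second, your ``Main obstacle'' paragraph overstates what is required. Your own estimate $(\star)$ asks only that the remainder be $o(\|h-h_0\|_\infty)$, not $O(\|h-h_0\|_\infty^{2})$; an $o$-type bound is all that survives division by $t$. And that $o$-bound follows directly from weak convergence $S_{K_t}\to S_{K_0}$ together with the uniform convergence $(h_t-h_0)/t\to f$: writing $h_t-h_0=tf+o(t)$ uniformly gives
\[
\int_{\mathbb{S}^{n-1}}(h_t-h_0)\,dS_{K_t}=t\int_{\mathbb{S}^{n-1}} f\,dS_{K_t}+o(t)=t\int_{\mathbb{S}^{n-1}} f\,dS_{K_0}+o(t),
\]
with no need for a quantitative modulus on the surface area measures. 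So the lower bound closes more easily than you suggest.
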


\begin{lemma}
If $K$ and $L$ are $o$-symmetric convex bodies in the plane, then
\begin{equation}\label{plane log M}
\int_{\mathbb S^{1}}\log\frac{h_L}{h_K}\,dV_K\geq\frac{1}{2}\log\frac{V(L)}{V(K)},
\end{equation}
with equality if and only if, either $K$ and $L$ are dilatates or $K$ and $L$ are parallelograms with parallel sides.
\end{lemma}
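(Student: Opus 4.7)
The plan is to exploit the equivalence, in the $o$-symmetric planar setting, between the inequality to be proved and the logarithmic Brunn-Minkowski inequality
\begin{equation*}
V\big((1-t)\cdot K+_0 t\cdot L\big)\geq V(K)^{1-t}V(L)^{t},\qquad t\in[0,1],
\end{equation*}
and then to establish the latter by reducing to a computation on symmetric polygons. For the equivalence I would apply Lemma \ref{variational formula} with $h_t=h_K^{1-t}h_L^{t}$. Since $h_K,h_L\in C_e^{+}(\mathbb S^{1})$, the convergence $(h_t-h_0)/t\to h_K\log(h_L/h_K)$ is uniform on $\mathbb S^{1}$, so for $K_t=[h_t]$ one obtains
\begin{equation*}
\left.\frac{d}{dt}V(K_t)\right|_{t=0}=\int_{\mathbb S^{1}}h_K\log\frac{h_L}{h_K}\,dS_K=2\int_{\mathbb S^{1}}\log\frac{h_L}{h_K}\,dV_K.
\end{equation*}
If the log Brunn-Minkowski inequality holds, then $t\mapsto V(K_t)/[V(K)^{1-t}V(L)^{t}]$ attains its minimum value $1$ at $t=0$; its right derivative at $0$ is therefore nonnegative, and rearranging yields the desired integral inequality after dividing by $V(K)$.

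I would then reduce log Brunn-Minkowski to the polygonal case. Approximating $K$ and $L$ in the Hausdorff metric by $o$-symmetric convex polygons and invoking the Aleksandrov convergence lemma together with the continuity of volume reduces the problem to polygons. After passing to a common refinement of the outer normal fans, one may assume that $K$ and $L$ share the same outer unit normals $\{\pm u_1,\ldots,\pm u_m\}$, with $h_K(\pm u_i)=a_i$ and $h_L(\pm u_i)=b_i$, so that $K_t$ is the $o$-symmetric polygon with these same normals and support values $a_i^{1-t}b_i^{t}$.

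With the normals fixed, the planar area becomes a homogeneous quadratic form $V(h)=\sum_{i,j}c_{ij}h_ih_j$ in the support values $h=(h_1,\ldots,h_m)$, with coefficients $c_{ij}$ determined by the angles of the common fan. The log Brunn-Minkowski inequality thereby reduces to the finite-dimensional statement
\begin{equation*}
\log V(a_1^{1-t}b_1^{t},\ldots,a_m^{1-t}b_m^{t})\geq (1-t)\log V(a)+t\log V(b),
\end{equation*}
which I would prove by induction on the number $m$ of antipodal normal pairs. The base case $m=2$ corresponds to a parallelogram and is a direct computation producing equality. For the inductive step I would perturb a well-chosen support value so as to collapse two adjacent normals into a single one, decomposing the polygon as a Minkowski sum of a lower-order polygon and a segment, and show that the first variation of the defect has the favorable sign by using the $o$-symmetry of the $c_{ij}$.

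The main obstacle is this last step: a naive log-convexity estimate goes the wrong way, since $\log\sum c_{ij}e^{y_i+y_j}$ is convex rather than concave in $y=\log h$, so $o$-symmetry must be used in an essential way to recover the correct direction. For the equality characterization, running the induction in reverse shows that equality can persist only when every reduction is itself an equality; this forces either $(a_i)$ and $(b_i)$ to be proportional---giving dilates---or the normal fan to consist of just two antipodal pairs with matching proportions, giving parallelograms with parallel sides. Extending the equality characterization from polygons back to general $o$-symmetric convex bodies then follows from a standard limiting/rigidity argument.
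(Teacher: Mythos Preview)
The paper does not give its own proof of this lemma; it simply quotes it as Theorem~1.4 of B\"or\"oczky--Lutwak--Yang--Zhang \cite{BLYZ1}. So there is no in-paper argument to compare against, and any complete argument you supply would be genuinely additional.

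Your proposal, however, has a real gap at the decisive point. The reduction you outline---pass to the log Brunn--Minkowski form via Lemma~\ref{variational formula}, approximate by $o$-symmetric polygons with a common normal fan, and write the area as a quadratic form in the support numbers---is standard and fine. But the inductive step is the entire content of the theorem, and you explicitly flag it as ``the main obstacle'' without resolving it. Saying that ``$o$-symmetry must be used in an essential way'' is a diagnosis, not a proof: you give no mechanism (no monotonicity, no two-point rearrangement, no factorization into segment-plus-lower-polygon with a controllable defect) that actually turns the wrong-way convexity of $\log\sum c_{ij}e^{y_i+y_j}$ around. In the B\"or\"oczky--Lutwak--Yang--Zhang argument this step is the heart of the matter and is carried out by a careful, dimension-two-specific analysis; your sketch does not supply an equivalent.

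Two further points. First, the polygon $P_t$ with support numbers $a_i^{1-t}b_i^{t}$ at the common normals \emph{contains} the Wulff shape $[h_K^{1-t}h_L^{t}]$, so establishing $V(P_t)\ge V(K)^{1-t}V(L)^t$ does not by itself bound $V([h_K^{1-t}h_L^{t}])$ from below; you need to argue that the discrepancy vanishes under refinement, or work with $P_t$ throughout and only pass to the Wulff shape in the limit. Second, the sentence ``extending the equality characterization from polygons back to general $o$-symmetric convex bodies then follows from a standard limiting/rigidity argument'' is not something you can take for granted: equality cases routinely fail to survive approximation, and here the rigid alternatives (dilates vs.\ parallelograms with parallel sides) require a separate argument, which in \cite{BLYZ1} is again nontrivial.
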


\section{Several lemmas}\label{section3}

In this section, we give some critical lemmas  on the logarithmic Minkowski inequality and the cone-volume measure.

\begin{lemma}\label{SLn transform}
If $K,L$ are convex bodies in $\mathbb{R}^{n}$ containing the origin in their interiors, then for $T\in \mathrm{GL}(n),$
$$\frac{1}{V(TK)}\int_{\mathbb{S}^{n-1}}\log\frac{h_{TL}}{h_{TK}}\,d{V}_{TK}=\frac{1}{V(K)}\int_{\mathbb{S}^{n-1}}\log\frac{h_{L}}{h_{K}}\,d{V}_{K}.$$
\end{lemma}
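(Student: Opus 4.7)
The plan is to reduce the statement to the definition of the cone-volume measure, $dV_K = \tfrac{1}{n}h_K\,dS_K$, and apply the two transformation rules already recorded in Section \ref{section2}: the support function rule $h_{TK}(u) = h_K(T^t u)$ and the surface area change of variables (\ref{surface area measure}). I expect all dependence on $T$ to cancel: the Jacobian $|\det T|$ that appears in (\ref{surface area measure}) will match the $|\det T|$ in $V(TK) = |\det T|\,V(K)$, while the ratio $h_{TL}/h_{TK}$ will turn out to be invariant in disguise thanks to positive $1$-homogeneity of support functions.

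Concretely, I would first rewrite
\[
\int_{\mathbb{S}^{n-1}}\log\frac{h_{TL}}{h_{TK}}\,dV_{TK} = \frac{1}{n}\int_{\mathbb{S}^{n-1}} h_K(T^t u)\log\frac{h_L(T^t u)}{h_K(T^t u)}\,dS_{TK}(u),
\]
so that the integrand is a single function $f(u) = h_K(T^t u)\log\bigl(h_L(T^t u)/h_K(T^t u)\bigr)$ against $dS_{TK}$. I would then apply (\ref{surface area measure}). The key algebraic identity at that step is
\[
T^t\langle T^{-t}u\rangle = \frac{u}{|T^{-t}u|},
\]
from which $1$-homogeneity of the support function yields
\[
h_K\bigl(T^t\langle T^{-t}u\rangle\bigr) = \frac{h_K(u)}{|T^{-t}u|}, \qquad h_L\bigl(T^t\langle T^{-t}u\rangle\bigr) = \frac{h_L(u)}{|T^{-t}u|}.
\]
Hence the log-ratio pulls back to $\log\bigl(h_L(u)/h_K(u)\bigr)$, and the leftover factor $1/|T^{-t}u|$ from $h_K$ exactly cancels the extra weight $|T^{-t}u|$ inherited from (\ref{surface area measure}). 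What remains is
\[
|\det T|\int_{\mathbb{S}^{n-1}} h_K(u)\log\frac{h_L(u)}{h_K(u)}\,dS_K(u) = n\,|\det T|\int_{\mathbb{S}^{n-1}}\log\frac{h_L}{h_K}\,dV_K.
\]

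Dividing both sides by $V(TK) = |\det T|\,V(K)$ then makes the $|\det T|$ disappear and produces the stated identity. There is no serious obstacle here — this is essentially a bookkeeping lemma. The only point that requires care is invoking positive homogeneity at precisely the right moment, so that normalizing $T^t u$ to the unit vector $\langle T^t u\rangle$ does not introduce spurious factors of $|T^t u|$ in the ratio; this is exactly what makes the normalized logarithmic integral genuinely $\mathrm{GL}(n)$-invariant, and is the conceptual content of the lemma.
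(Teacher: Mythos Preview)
Your argument is correct: you unpack $dV_{TK}=\tfrac{1}{n}h_{TK}\,dS_{TK}$, apply the surface-area change of variables (\ref{surface area measure}), and use positive $1$-homogeneity so that the factor $|T^{-t}u|$ cancels and only $|\det T|$ survives, which is then absorbed by $V(TK)=|\det T|\,V(K)$. The paper, however, takes a different route. It does not compute directly against $dS_{TK}$; instead it first shows that the logarithmic Minkowski combination commutes with $T$, namely $(1-\lambda)\cdot TK+_0\lambda\cdot TL=T\bigl((1-\lambda)\cdot K+_0\lambda\cdot L\bigr)$, and then invokes Aleksandrov's variational lemma (Lemma~\ref{variational formula}) to identify
\[
\frac{n}{V(K)}\int_{\mathbb S^{n-1}}\log\frac{h_L}{h_K}\,dV_K=\frac{d}{d\lambda}\Big|_{\lambda=0}\log V\bigl((1-\lambda)\cdot K+_0\lambda\cdot L\bigr).
\]
Since $\log V(TQ)=\log|\det T|+\log V(Q)$, the derivative in $\lambda$ is unchanged under $T$, and the identity follows. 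Your approach is more elementary and entirely self-contained, relying only on the two transformation formulas already stated in Section~\ref{section2}; the paper's approach is more conceptual, tying the invariance to the $\mathrm{GL}(n)$-equivariance of the $L_0$-combination that underlies the whole log-Brunn--Minkowski framework.
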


\begin{proof}
\textbf{Step 1}. We show that $(1-\lambda)\cdot TK+_0\lambda\cdot TL=T((1-\lambda)\cdot K+_0\lambda\cdot L)$ for any $T\in \mathrm{GL}(n)$. By \eqref{supportfunction}, it follows that
\begin{align*}
(1-\lambda)\cdot TK+_0\lambda\cdot TL=& \bigcap_{u\in \mathbb{S}^{n-1}}\left\{x\in\mathbb R^n: x\cdot u\leq h_{TK}^{1-\lambda}(u)h_{TL}^{\lambda}(u)\right\}\\
=& \bigcap_{u\in \mathbb{S}^{n-1}} \left\{x\in\mathbb R^n: x\cdot T^{-t}T^t u\leq h_{K}^{1-\lambda}(T^tu)h_{L}^{\lambda}(T^tu)\right\}\\
=& T\bigcap_{u\in \mathbb{S}^{n-1}} \left\{T^{-1}x\in\mathbb R^n: T^{-1}x\cdot T^t u\leq h_{K}^{1-\lambda}(T^tu)h_{L}^{\lambda}(T^tu)\right\}\\
=&T\bigcap_{v\in \mathbb{S}^{n-1}} \left\{y\in\mathbb R^n: y\cdot v\leq h_{K}^{1-\lambda}(v)h_{L}^{\lambda}(v)\right\}\\
=& T ((1-\lambda)\cdot K+_0\lambda\cdot L).
\end{align*}

\textbf{Step 2}. We show the relationship between the integral and the differential of $\log V((1-\lambda)\cdot K+_0\lambda \cdot L)$. By Lemma \ref{variational formula}, it follows that
\begin{align*}
\frac{d}{d\lambda}(\log V((1-\lambda)\cdot K+_0\lambda \cdot L))\bigg|_{\lambda=0} = &\frac{1}{V(K)}\frac{d}{d\lambda}(V((1-\lambda)\cdot K+_0\lambda \cdot L))\bigg|_{\lambda=0}\\
= & \frac{1}{V(K)}\int_{\mathbb S^{n-1}} h_K\log\frac{h_L}{h_K}dS_K\\
= & \frac{n}{V(K)}\int_{\mathbb S^{n-1}} \log\frac{h_L}{h_K}d V_K.
\end{align*}

Combining these with that $V(T((1-\lambda)\cdot K+_0\lambda\cdot L))=|\det T|V((1-\lambda)\cdot K+_0\lambda\cdot L)$ for $T\in\mathrm{GL}(n)$, the desired equation is proved.
\end{proof}

The following lemma is essentially contained in  Lemma 4.1 of \cite{BLYZ}. For the completeness of this article, we present its proof in the following.
\begin{lemma}\label{variation}
Suppose that $K,L\in\mathcal{K}^n_{os}$. If $L$ solves the extremum problem
\[\inf\left\{\frac{1}{V(K)}\int_{\mathbb S^{n-1}}\log h_Q\,d{V}_K-\frac{1}{n}\log V(Q):Q\in\mathcal{K}^n_{os}\right\},\]
then $\frac{{V}_{L}(\cdot)}{V(L)}=\frac{{V}_{K}(\cdot)}{V(K)}$.
\end{lemma}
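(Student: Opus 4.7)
My approach is variational: since $L$ minimizes the functional
\[\Phi(Q)=\frac{1}{V(K)}\int_{\mathbb S^{n-1}}\log h_Q\,dV_K-\frac{1}{n}\log V(Q)\]
over $\mathcal K^n_{os}$, I will perturb $L$ inside this class and extract a first-order optimality condition. For an arbitrary even function $f\in C_e(\mathbb S^{n-1})$ and small $t\in\mathbb R$, I set
\[h_t(u)=h_L(u)\,e^{tf(u)},\qquad L_t=[h_t].\]
Because $h_L$ is even and strictly positive and $f$ is even, $h_t$ is even and strictly positive for small $|t|$, so $L_t\in\mathcal K^n_{os}$ is an admissible competitor.

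The key technical trick is to avoid differentiating $h_{L_t}$ directly. Instead I work with the auxiliary function
\[\Psi(t)=\frac{1}{V(K)}\int_{\mathbb S^{n-1}}\log h_t\,dV_K-\frac{1}{n}\log V(L_t).\]
Since $h_{L_t}\le h_t$ pointwise (Wulff shape majorant) and $h_{L_0}=h_L=h_0$, we have $\Psi(t)\ge\Phi(L_t)\ge\Phi(L)=\Psi(0)$, so $\Psi$ attains its minimum at $t=0$, hence $\Psi'(0)=0$.

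Next, I compute $\Psi'(0)$. The integrand $\log h_t=\log h_L+tf$ is affine in $t$, so the first term contributes $\frac{1}{V(K)}\int f\,dV_K$. For the volume term, Lemma~\ref{variational formula} applied with the family $h_t=h_L e^{tf}$, whose partial $t$-derivative at $0$ equals $h_L f$ uniformly on $\mathbb S^{n-1}$, gives
\[\frac{d}{dt}V(L_t)\bigg|_{t=0}=\int_{\mathbb S^{n-1}}h_L\,f\,dS_L=n\int_{\mathbb S^{n-1}}f\,dV_L.\]
Thus $\Psi'(0)=0$ rearranges to
\[\frac{1}{V(K)}\int_{\mathbb S^{n-1}}f\,dV_K=\frac{1}{V(L)}\int_{\mathbb S^{n-1}}f\,dV_L\qquad\text{for every }f\in C_e(\mathbb S^{n-1}).\]

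To conclude, I note that $K$ and $L$ being $o$-symmetric forces $V_K$ and $V_L$ to be even Borel measures on $\mathbb S^{n-1}$. An even signed measure that vanishes against every even continuous function must be identically zero (any $g\in C(\mathbb S^{n-1})$ splits into even and odd parts, and the odd part integrates to zero against an even measure). Applying this to $\frac{V_K}{V(K)}-\frac{V_L}{V(L)}$ yields $\frac{V_L(\cdot)}{V(L)}=\frac{V_K(\cdot)}{V(K)}$, as desired. The only step that requires genuine care is the auxiliary-function reduction $\Psi\ge\Phi(L_t)$ and checking $L_t\in\mathcal K^n_{os}$ for small $t$; both follow from elementary properties of support functions and Wulff shapes, so I do not expect a serious obstacle.
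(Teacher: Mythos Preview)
Your argument is correct and is essentially the paper's own proof: the paper lifts the functional to $C_e^+(\mathbb S^{n-1})$ by setting $\Phi(f)=\frac{1}{V(K)}\int\log f\,dV_K-\frac{1}{n}\log V([f])$, and your $\Psi(t)$ is exactly this $\Phi(h_t)$, with your inequality $\Psi(t)\ge\Phi(L_t)$ being the paper's observation that $\Phi(h_Q)\le\Phi(f)$ when $Q=[f]$. The remaining steps---the perturbation $h_t=h_L e^{tf}$, Aleksandrov's variational formula for $\frac{d}{dt}V(L_t)|_{t=0}$, and reading off the measure identity from $\Psi'(0)=0$---match the paper line by line; you only add the (correct) remark that testing against $C_e(\mathbb S^{n-1})$ suffices because both cone-volume measures are even.
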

\begin{proof}
\textbf{Step 1}. Consider the minimization problem on $C_{e}^{+}(\mathbb S^{n-1})$,
\[\inf\{\Phi(f):f\in C_{e}^{+}(\mathbb S^{n-1})\},\]
where the functional $\Phi:C_{e}^{+}(\mathbb S^{n-1})\to(0,\infty)$ is defined by
\[\Phi(f)=\frac{1}{V(K)}\int_{\mathbb S^{n-1}}\log f\,d{V}_K-\frac{1}{n}\log V([f]),\]
for $f\in C_e^{+}(\mathbb S^{n-1})$. Since the functional $V:C_{e}^{+}(\mathbb S^{n-1})\to(0,\infty)$ is continuous, it follows that the functional $\Phi:C_{e}^{+}(\mathbb S^{n-1})\to(0,\infty)$ is continuous as well.

\textbf{Step 2}. We show that $h_L$ solves $\inf\{\Phi(f):f\in C_{e}^{+}(\mathbb S^{n-1})\}$.

In fact, let $f\in C_{e}^{+}(\mathbb S^{n-1})$ and $Q$ be the Wulff shape of $f$. Then $V([f])=V(Q)=V([h_Q])$ but $h_Q\leq f$. Therefore, $\Phi(h_Q)\leq \Phi(f)$. Hence,
\[\inf\{\Phi(h_Q):Q\in\mathcal{K}_{os}^n\}=\inf\{\Phi(f):f\in C_{e}^{+}(\mathbb S^{n-1})\}.\]
Since $\Phi(h_Q)=\frac{1}{V(K)}\int_{\mathbb S^{n-1}}\log h_Q\,d{V}_K-\frac{1}{n}\log V(Q)$ and the hypothesis of this lemma is that $L$ solves the left infimum, it yields that $h_L$ solves $\inf\{\Phi(f):f\in C_{e}^{+}(\mathbb S^{n-1})\}$.

\textbf{Step 3}. Suppose that $g\in C_{e}(\mathbb S^{n-1})$ is arbitrary but fixed. Consider the family $h_t\in C_{e}^{+}(\mathbb S^{n-1})$, where the function $h_t=h(t,\cdot):\mathbb R\times\mathbb S^{n-1}\to(0,\infty)$ is defined by
\[h_t=h(t,\cdot)=h_Le^{tg},\]
and let $L_t$ denote the Wulff shape of $h_t$. Note that $h_0=h_L$.

Since $g$ is bounded on $\mathbb S^{n-1}$, for $h_t=h_Le^{tg}$, the convergence \[\frac{h_t-h_0}{t}\to gh_L,\text{ as } t\to0\]
is uniformly on $\mathbb S^{n-1}$, via Lemma \ref{variational formula}, it follows that
\[\frac{d}{dt}V(L_t)\bigg|_{t=0}=\int_{\mathbb S^{n-1}}gh_L\,dS_L.\]
So the function $t\to \Phi(h_t)$ is differentiable at $t=0$. Combining this, with the fact that $h_0=h_L$ attains the minimum, it yields that
\begin{align*}
0=&\frac{d}{dt}\Phi(h_t)\bigg|_{t=0}\\
=&\frac{1}{V(K)}\int_{\mathbb S^{n-1}}g\,dV_K-\frac{1}{n}\frac{1}{V(L)}\frac{d}{dt}V(L_t)\bigg|_{t=0}\\
=&\frac{1}{V(K)}\int_{\mathbb S^{n-1}}g\,dV_K-\frac{1}{n}\frac{1}{V(L)}\int_{\mathbb S^{n-1}}gh_L\,dS_L.
\end{align*}

Since the above equation holds for arbitrary $g\in C_{e}(\mathbb S^{n-1})$, it follows that
\[\frac{V_L(\cdot)}{V(L)}=\frac{1}{nV(L)}h_L\,S_L(\cdot)=\frac{V_K(\cdot)}{V(K)},\]
as desired.
\end{proof}

\begin{lemma}
Suppose that $\xi_1$ and $\xi_2$ are orthogonal complementary subspaces in $\mathbb R^n$ with $0<\dim\xi_i=k_i<n$, $i=1,2$. If $K_1$ and $K_2$ are convex bodies in $\xi_1$ and $\xi_2$ containing the origin in their interiors, respectively, then the cone-volume measure of  $K_1+K_2$ in $\mathbb R^n$ is concentrated on $(\xi_1\cup\xi_2)$ and
\begin{equation}\label{decompsition}
V_{K_1+K_2}(\cdot)=\frac{k_1}{n}|K_2|V_{K_1}(\cdot)+\frac{k_2}{n}|K_1|V_{K_2}(\cdot).
\end{equation}
\end{lemma}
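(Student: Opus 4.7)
The plan is to decompose the boundary of $K:=K_1+K_2$ by the direction of the outer unit normals, thereby reducing $S_K$ and $V_K$ to the surface area and cone-volume measures of the factors separately.

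First I would analyze the Gauss map. Every $x\in\partial K$ admits a (not necessarily unique) representation $x=x_1+x_2$ with $x_i\in K_i$. If both $x_i$ lay in $\mathrm{relint}\,K_i$, then $x\in\mathrm{int}\,K$, a contradiction. The subset where $x_1\in\mathrm{relbd}\,K_1$ and $x_2\in\mathrm{relbd}\,K_2$ is contained in $\mathrm{relbd}\,K_1+\mathrm{relbd}\,K_2$, whose Hausdorff dimension is at most $(k_1-1)+(k_2-1)=n-2$, and hence it is $\mathcal H^{n-1}$-negligible. At the remaining boundary points with, say, $x_1\in\mathrm{relbd}\,K_1$ and $x_2\in\mathrm{relint}\,K_2$, one can perturb $x_2$ by any sufficiently small $v\in\xi_2$ while staying inside $K$, so every outer unit normal $u$ at $x$ satisfies $u\perp\xi_2$, i.e.\ $u\in\mathbb{S}^{n-1}\cap\xi_1$; the symmetric statement holds with the roles of $\xi_1$ and $\xi_2$ swapped. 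Consequently $S_K$ is concentrated on $(\mathbb{S}^{n-1}\cap\xi_1)\cup(\mathbb{S}^{n-1}\cap\xi_2)$.

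Next I would compute the pieces. For a Borel set $\omega\subseteq\mathbb{S}^{n-1}\cap\xi_1$, with $\nu_{K_1}$ denoting the Gauss map of $K_1$ taken inside $\xi_1\cong\mathbb{R}^{k_1}$, the previous paragraph yields $\nu_K^{-1}(\omega)=\nu_{K_1}^{-1}(\omega)+K_2$ up to an $\mathcal H^{n-1}$-null set. Since $\xi_1\perp\xi_2$, this orthogonal Minkowski sum is isometric to the Cartesian product $\nu_{K_1}^{-1}(\omega)\times K_2$, so Fubini gives
$$S_K(\omega)=\mathcal H^{n-1}\bigl(\nu_{K_1}^{-1}(\omega)+K_2\bigr)=\mathcal H^{k_1-1}\bigl(\nu_{K_1}^{-1}(\omega)\bigr)\,\mathcal H^{k_2}(K_2)=|K_2|\,S_{K_1}(\omega).$$
Moreover, since $K_2\subseteq\xi_1^{\perp}$, $h_{K_2}$ vanishes on $\xi_1$, so $h_K=h_{K_1}+h_{K_2}=h_{K_1}$ on $\mathbb{S}^{n-1}\cap\xi_1$. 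Combining this with the defining relations $dV_K=\tfrac{1}{n}h_K\,dS_K$ and $dV_{K_1}=\tfrac{1}{k_1}h_{K_1}\,dS_{K_1}$ yields
$$V_K(\omega)=\tfrac{1}{n}\int_{\omega}h_K\,dS_K=\tfrac{|K_2|}{n}\int_{\omega}h_{K_1}\,dS_{K_1}=\tfrac{k_1}{n}|K_2|\,V_{K_1}(\omega).$$
The symmetric identity on $\mathbb{S}^{n-1}\cap\xi_2$ together with addition produces (\ref{decompsition}).

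The main technical hurdle is the Gauss-map decomposition in the second paragraph: verifying that at the "mixed" boundary points the outer unit normal is pinned to lie in $\xi_1$ or $\xi_2$, that the corner set $\mathrm{relbd}\,K_1+\mathrm{relbd}\,K_2$ is $\mathcal H^{n-1}$-null via the dimensional bound, and that the orthogonal product identity $\mathcal H^{n-1}(A+B)=\mathcal H^{k_1-1}(A)\,\mathcal H^{k_2}(B)$ holds for $A\subseteq\xi_1$ and $B\subseteq\xi_2$. Once these geometric and measure-theoretic points are secured, the cone-volume formula follows by direct substitution.
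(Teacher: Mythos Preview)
Your proposal is correct and follows essentially the same approach as the paper's proof: decompose $\partial(K_1+K_2)$ into the two ``cylindrical'' pieces $\mathrm{relbd}\,K_i+\mathrm{relint}\,K_j$ plus the $\mathcal H^{n-1}$-negligible corner set, show the outer normals on each piece lie in the corresponding $\xi_i$, identify $\nu_K^{-1}(\omega)$ with an orthogonal product, and apply Fubini together with $h_K=h_{K_i}$ on $\xi_i$. The only cosmetic difference is that the paper pins down the Gauss map by citing a result from Schneider on normal cones and using the projection $(K_1+K_2)|_{\xi_1}=K_1$, whereas you use a direct perturbation argument; the logical structure is identical.
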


\begin{proof}
Observe that
\begin{equation}\label{classification of boundary point}
\mathrm{bd\,}(K_1+K_2)=(\mathrm{relbd\,} K_1+\mathrm{relint\,}K_2)\cup(\mathrm{relbd\,} K_2+\mathrm{relint\,}K_1)\cup(\mathrm{relbd\,} K_1+\mathrm{relbd\,} K_2).
\end{equation}

Consider $\mathbb R^n$ as the orthogonal sum of $\xi_1$ and $\xi_2$. Write $y=(y_1,y_2)\in\mathbb R^n$ and identify $y_1$ with $(y_1,0)$ and $y_2$ with $(0,y_2)$.

Assume that $y_1+y_2\in\mathrm{relbd\,}K_1+\mathrm{relint\,}K_2$ with a unique unit normal. In the following, we show that $\nu_{K_1+K_2}(y_1+y_2)=\nu_{K_1+K_2}(y_1)=\nu_{K_1}(y_1)$. In fact, since $y_2,o\in\mathrm{relint\,}K_2$, it follows that $y_1+y_2,y_1\in y_1+\mathrm{relint\,}K_2=\mathrm{relint\,}(y_1+K_2)$. Note that $\mathrm{relint\,}(y_1+K_2)$ is relative open. So $\nu_{K_1+K_2}(y_1+y_2)=\nu_{K_1+K_2}(y_1)$ by Theorem 2.1.2 of \cite{Schneider} and the definition of the normal cone of convex bodies. Since $(K_1+K_2)|_{\xi_1}=K_1$ and $\nu_{K_1+K_2}(y_1)\in\xi_1$, it follows that $h_{K_1+K_2}\big(\nu_{K_1+K_2}(y_1)\big)=h_{K_1}\big(\nu_{K_1+K_2}(y_1)\big)$. Hence, $y_1\cdot \nu_{K_1+K_2}(y_1)=h_{K_1}\big(\nu_{K_1+K_2}(y_1)\big)$. From the fact that $y_1\in\mathrm{relbd\,}K_1$ and the definition of the support function of convex bodies, it follows that $\nu_{K_1+K_2}(y_1)=\nu_{K_1}(y_1)$ as desired.

Suppose that $\omega\subseteq\mathbb S^{n-1}\cap\xi_1$. Then $\nu_{K_1+K_2}^{-1}(\omega)\subseteq\mathrm{relbd\,} K_1+\mathrm{relint\,}K_2$. Combining this, $(K_1+K_2)|_{\xi_1}=K_1$, it follows that
\begin{align*}
V_{K_1+K_2}(\omega)=&\frac{1}{n}\int_{\omega}h_{K_1+K_2}(u)\,dS_{K_1+K_2}(u)\\
=&\frac{1}{n}\int_{\omega}h_{K_1}(u)\,dS_{K_1+K_2}(u)\\
=&\frac{1}{n}\int_{\nu_{K_1+K_2}^{-1}(\omega)}y_1\cdot \nu_{K_1}(y_1)d\mathcal{H}^{n-1}(y_1+y_2)\\
=&\frac{1}{n}\int_{\nu_{K_1}^{-1}(\omega)}y_1\cdot \nu_{K_1}(y_1)|K_2|d\mathcal{H}^{k_1-1}(y_1)\\
=&\frac{1}{n}\int_{\omega}h_{K_1}(u)|K_2|\,dS_{K_1}(u)\\
=&\frac{k_1}{n}|K_2|\frac{1}{k_1}\int_{\omega}h_{K_1}(u)\,dS_{K_1}(u)\\
=&\frac{k_1}{n}|K_2|V_{K_1}(\omega).
\end{align*}

Similarly, we obtain $V_{K_1+K_2}(\omega)=\frac{k_2}{n}|K_1|V_{K_2}(\omega),\ \text{for}\  \omega\subseteq\mathbb S^{n-1}\cap\xi_2.$

Since the $(n-1)$-dimensional Hausdorff measure of $\mathrm{relbd\,} K_1+\mathrm{relbd\,} K_2$ is zero, together with (\ref{classification of boundary point}),
it follows that the surface area measure $S_{K_1+K_2}$ is concentrated on $(\xi_1\cup\xi_2)$, and thus the cone-volume measure $V_{K_1+K_2}$ is as well. Hence,
\[V_{K_1+K_2}(\cdot)=\frac{k_1}{n}|K_2|V_{K_1}(\cdot)+\frac{k_2}{n}|K_1|V_{K_2}(\cdot),\]
as desired.
\end{proof}

\section{The Logarithmic Minkowski inequality}\label{section4}
In this section, we present the proof of  main results in this paper.

\begin{theorem}
Suppose that $K,L\in\mathcal{K}^3_{os}$ and $K$ is a cylinder. Then
\[\frac{1}{V(K)}\int_{\mathbb S^2}\log\frac{h_L}{h_K}\,dV_K\geq\frac{1}{3}\log\frac{V(L)}{V(K)},\]
with equality if and only if $K$ and $L$ are relative cylinders.
\end{theorem}

\begin{proof}
Without loss of generality, by Lemma \ref{SLn transform}, assume that $K=\bar{K}+a[-u_0,u_0]$, where $\bar{K}\subseteq u_0^{\perp}$, $a>0$, $u_0\in\mathbb S^2$. Then the cone-volume measure of $K$ is concentrated on $(u_0^{\perp}\cup\{\pm u_0\})$ and
\[V_K(\cdot)=\frac{1}{3}a|\bar{K}|\big(\delta_{u_0}(\cdot)+\delta_{-u_0}(\cdot)\big)+\frac{4}{3}a\,V_{\bar{K}}(\cdot).\]

For any $L\in\mathcal{K}^3_{os}$, it follows that $L\subseteq L|_{u_0^{\perp}}+L|_{lu_0}$. Here, $lu_0$ denotes the 1-dimensional subspace spanned by $u_0$.
Combining this, that $h_L(u_0)=h_{L}(-u_0)$ and $h_{L|_{u_0^{\perp}}}=h_L$ on $u_0^{\perp}$, the  inequality (\ref{logM}), that $K|_{u_0^{\perp}}=\bar{K}$ and $V(K)=2h_K(u_0)|\bar K|=2a|\bar K|$, it follows that
\begin{align*}
\int_{\mathbb S^2}\log\frac{h_L}{h_K}\,dV_K=&\frac{1}{3}a|\bar{K}|\left(\log\frac{h_L(u_0)}{h_K(u_0)}+\log\frac{h_L(-u_0)}{h_K(-u_0)}\right)+\frac{4}{3}a\int_{\mathbb S^2\cap u_0^{\perp}}\log\frac{h_L}{h_K}\,dV_{\bar{K}}\\
=&\frac{2}{3}a|\bar{K}|\log\frac{h_L(u_0)}{h_K(u_0)}+\frac{4}{3}a\int_{\mathbb S^2\cap u_0^{\perp}}\log\frac{h_{L|_{u_0^{\perp}}}}{h_{\bar{K}}}\,dV_{\bar{K}}\\
\geq&\frac{2}{3}a|\bar{K}|\log\frac{h_L(u_0)}{h_K(u_0)}+\frac{4}{3}a\frac{|\bar{K}|}{2}\log\frac{|L|_{u_0^{\perp}}|}{|\bar K|}\\
=&\frac{1}{3}V(K)\log\frac{h_L(u_0)}{h_K(u_0)}+\frac{1}{3}V(K)\log\frac{|L|_{u_0^{\perp}}|}{|\bar K|}\\
=&\frac{1}{3}V(K)\log\frac{h_L(u_0)|L|_{u_0^{\perp}}|}{h_K(u_0)|\bar K|}\\
\geq &\frac{1}{3}V(K)\log\frac{V(L)}{V(K)}.
\end{align*}

Assume that the equality holds. Then $V(L)=2h_L(u_0)|L|_{u_0^{\perp}}|$. Thus, the inclusion $L\subseteq L|_{u_0^{\perp}}+L|_{lu_0}$ implies that $L= L|_{u_0^{\perp}}+L|_{lu_0}$ is a cylinder. Meanwhile, the equality of the  logarithmic Minkowski inequality for $\bar K$ and $L|_{u_0^{\perp}}$ holds, which implies that $\bar K$ and $L|_{u_0^{\perp}}$ are dilatates, or $\bar K$ and $L|_{u_0^{\perp}}$ are parallelograms with parallel sides, i.e., relative cylinders. So $K$ and $L$ are relative cylinders.
\end{proof}

The final theorem gives the relationship between the logarithmic Minkowski inequality and the uniqueness of the cone-volume measure.

\begin{theorem}
Let $n\geq 1$. The following assertions are equivalent.

(1) If $K,L\in\mathcal{K}^n_{os}$, then
$$\frac{1}{V(K)}\int_{\mathbb S^{n-1}}\log\frac{h_L}{h_K}\,d{V}_K\geq\frac{1}{n}\log\frac{V(L)}{V(K)},$$
with equality if and only if $K$ and $L$ are dilatates, or $K$ and $L$ are relative cylinders.

(2) If $K,L\in\mathcal{K}^n_{os}$ and $V_K=V_L$, then
$K=L$, or $K$ and $L$ are relative cylinders.
\end{theorem}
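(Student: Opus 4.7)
I would prove the equivalence by handling the two directions separately: $(1)\Rightarrow(2)$ by a quick symmetrization of the inequality, and $(2)\Rightarrow(1)$ by the variational recipe built around Lemma 3.2 and modelled on the extremum argument of B\"or\"oczky--Lutwak--Yang--Zhang.

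For $(1)\Rightarrow(2)$, I assume (1) and $V_K=V_L$. Taking total masses on $\mathbb S^{n-1}$ gives $V(K)=V(L)$, so the right-hand side of (1) vanishes when applied to either $(K,L)$ or $(L,K)$. Since $V_K=V_L$, the two resulting integrals are negatives of one another, so each is zero and equality holds in (1). The equality clause of (1) then says that $K$ and $L$ are dilatates or relative cylinders, and the dilatate case combined with $V(K)=V(L)$ forces $K=L$.

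For $(2)\Rightarrow(1)$, I fix $K\in\mathcal{K}^n_{os}$ and consider the scale-invariant functional
\[\Phi_K(Q)=\frac{1}{V(K)}\int_{\mathbb S^{n-1}}\log h_Q\,dV_K-\frac{1}{n}\log V(Q),\qquad Q\in\mathcal{K}^n_{os}.\]
Inequality (1) is exactly $\Phi_K(Q)\ge\Phi_K(K)$ for every $Q\in\mathcal{K}^n_{os}$, and by scale invariance it suffices to minimise over the slice $\mathcal S=\{Q\in\mathcal{K}^n_{os}:V(Q)=V(K)\}$. Granting that the infimum is attained at some $L\in\mathcal S$, Lemma 3.2 yields $V_L/V(L)=V_K/V(K)$, and since $V(L)=V(K)$ this is $V_L=V_K$. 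Hypothesis (2) then forces $L=K$ or $K,L$ to be relative cylinders, and a short direct computation using the cone-volume decomposition of the previous lemma shows that a relative cylinder $L$ of $K$ with $V(L)=V(K)$ automatically satisfies $\Phi_K(L)=\Phi_K(K)$. Hence $\min_{\mathcal S}\Phi_K=\Phi_K(K)$, which is (1). Running the same argument on any $Q$ achieving equality in (1), after rescaling into $\mathcal S$, yields the equality characterisation.

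The main obstacle is the attainment of the infimum on $\mathcal S$. For a minimising sequence $\{Q_j\}\subseteq\mathcal S$ I need uniform Hausdorff bounds. For the upper bound on $\|h_{Q_j}\|_\infty$ I plan to exploit the subspace concentration inequality of B\"or\"oczky--Lutwak--Yang--Zhang for the even cone-volume measure $V_K$: a direction of unbounded $h_{Q_j}$ would, by continuity of $h_{Q_j}$, make $\log h_{Q_j}$ large on a spherical cap carrying a definite $V_K$-mass, pushing $\Phi_K(Q_j)$ above $\Phi_K(K)$ and contradicting minimality. For the lower bound, $o$-symmetry together with the volume constraint $V(Q_j)=V(K)$ and the standard slicing estimate $V(Q_j)\le 2h_{Q_j}(u)|Q_j|_{u^{\perp}}|$ prevent $Q_j$ from flattening onto a proper subspace. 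With both bounds in hand, the Blaschke selection theorem extracts a convergent subsequence, and the Aleksandrov convergence lemma makes $\Phi_K$ continuous along it, delivering a genuine minimiser $L\in\mathcal S$.
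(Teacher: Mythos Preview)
Your argument for $(1)\Rightarrow(2)$ is fine and matches the paper's.

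For $(2)\Rightarrow(1)$, the gap is in the attainment step when $K$ is itself a cylinder. In that case the cone-volume measure $V_K$ does \emph{not} satisfy the strict subspace concentration inequality: if $K=K_1+K_2$ with $K_i\subseteq\xi_i$, $\dim\xi_i=k_i$, and $\xi_2=\xi_1^\perp$, then the decomposition lemma gives $V_K(\mathbb S^{n-1}\cap\xi_1)=\frac{k_1}{n}V(K)$ with equality. Your compactness argument then collapses: the relative cylinders $Q_j=j^{1/k_1}K_1+j^{-1/k_2}K_2$ all lie in your slice $\mathcal S$ and satisfy $\Phi_K(Q_j)=\Phi_K(K)$ (by the very computation you allude to), yet $\|h_{Q_j}\|_\infty\to\infty$ and $\min_u h_{Q_j}(u)\to 0$. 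So a minimising sequence need not be bounded; neither your upper-bound argument via subspace concentration nor your slicing lower bound (which tacitly needs an a priori upper bound on $|Q_j|_{u^\perp}|$) goes through. The existence theorem of B\"or\"oczky--Lutwak--Yang--Zhang that underlies this whole scheme (their Theorem~6.3) requires \emph{strict} subspace concentration precisely to rule such sequences out.

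The paper handles this by separating off the cylinder case and running an induction on the dimension: one first checks that the $n$-dimensional uniqueness hypothesis $(2)$ descends to each factor subspace $\xi_i$, then invokes the inductive hypothesis to obtain the $k_i$-dimensional log-Minkowski inequality inside $\xi_i$, and finally combines the two via the cone-volume decomposition and the inclusion $L\subseteq L|_{\xi_1}+L|_{\xi_2}$. Only when $K$ is \emph{not} a cylinder does $V_K$ satisfy strict subspace concentration, and in that case your variational route (existence of a minimiser via \cite{BLYZ}, then Lemma~\ref{variation}, then hypothesis $(2)$, noting that the relative-cylinder alternative is vacuous) is exactly what the paper does.
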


\begin{proof}
 Assume that the assertion (1) holds. That is, for $K,L\in\mathcal{K}^n_{os}$,
$$\frac{1}{V(K)}\int_{\mathbb S^{n-1}}\log\frac{h_L}{h_K}\,d{V}_K\geq\frac{1}{n}\log\frac{V(L)}{V(K)},$$
and
$$\frac{1}{V(L)}\int_{\mathbb S^{n-1}}\log\frac{h_K}{h_L}\,d{V}_L\geq\frac{1}{n}\log\frac{V(K)}{V(L)},$$
when exchanging $K$ and $L$. Then
$$\frac{1}{V(K)}\int_{\mathbb S^{n-1}}\log\frac{h_L}{h_K}\,d{V}_K\geq\frac{1}{V(L)}\int_{\mathbb S^{n-1}}\log\frac{h_L}{h_K}\,d{V}_L.$$
The equality holds if and only if $K$ and $L$ are dilatates, or $K$ and $L$ are relative cylinders.
From that $V_K=V_L$, it follows that
\[\frac{1}{V(K)}\int_{\mathbb S^{n-1}}\log\frac{h_L}{h_K}\,d{V}_K\geq\frac{1}{V(L)}\int_{\mathbb S^{n-1}}\log\frac{h_L}{h_K}\,d{V}_L=\frac{1}{V(K)}\int_{\mathbb S^{n-1}}\log\frac{h_L}{h_K}\,d{V}_K.\]
So the equality holds, which implies that $K=L$, or $K$ and $L$ are relative cylinders. Thus, the assertion (1) implies the assertion (2).

In the following, we show that the assertion (2) implies (1) by induction of dimension $n$. The proof is basically in the same spirit as Theorem 7.1 in  \cite{BLYZ1}.

Let $n=1$. For $K=[-a,a]$ and $L=[-b,b]$ where $a,b>0$, since the cone-volume measure $V_K,V_L$ are both concentrated on two directions $\{-1,1\}$, $h_K(\pm 1)=a,h_L(\pm 1)=b$ and $V(K)=2a,V(L)=2b$, it follows that
\[\frac{1}{V(K)}\int_{\mathbb S^{n-1}}\log\frac{h_L}{h_K}\,d{V}_K=\frac{1}{2a}2\big(a\log\frac{b}{a}\big)=\log\frac{2b}{2a}=\log\frac{V(L)}{V(K)}.\]
In this case, the equality always holds. So the implication for dimension $1$  naturally holds.

Assume the implication (2) $\Rightarrow$ (1) is proved, when the space  dimension is  not greater than $n-1$. In the following, it suffices to prove the assertion (1) for dimension $n$  under the assumption that the assertion (2) for dimension $n$ holds. We divide it into two cases.

\textbf{Case 1}. Assume that $K$ is a cylinder. Let $K=K_1+K_2$, $K_i\subseteq\xi_i\in\mathrm{G}_{n,k_i}$, $i=1,2$ and  $k_1+k_2=n$. Without loss of generality, by Lemma \ref{SLn transform}, assume that $\xi_2=\xi_1^{\perp}$. In fact, if $\xi_2\neq\xi_1^{\perp}$, then there exists a linear transform $T\in\mathrm{GL}(n)$ such that $T\xi_1=\xi_1$ and $T\xi_2=\xi_1^{\perp}$. So $TK=TK_1+TK_2$ with $TK_1\subseteq\xi_1$ and $TK_2\subseteq\xi_1^{\perp}$.  Then the cone-volume measure $V_K$ is concentrated on $(\xi_1\cup\xi_2)$, and
\[dV_K=\frac{k_1}{n}|K_2|\,dV_{K_1}+\frac{k_2}{n}|K_1|\,dV_{K_2}.\]
Here, recall that $|K_i|$ denotes the $k_i$-dimensional volume of $K_i\subseteq\xi_i$ and $V_{K_i}$ denotes the $k_i$-dimensional cone-volume measure of $K_i$, $i=1,2$.

In the following, we first show that the assertion (2) for dimension $k_i$ in $\xi_i$ holds. In fact, for convex bodies $M_i,N_i\subseteq\xi_i\in\mathrm{G}_{n,k_i}$ with $V_{M_i}=V_{N_i}$,  $i=1,2$, it suffices to prove that $M_i=N_i$, or $M_i$ and $N_i$ are relative cylinders. Since $M_1+M_2$ is a convex body in $\mathbb R^n$, whose cone-volume measure is concentrated on $\mathbb S^{n-1}\cap(\xi_1\cup\xi_2)$, it follows that
\[dV_{M_1+M_2}=\frac{k_1}{n}|M_2|dV_{M_1}+\frac{k_2}{n}|M_1|dV_{M_2}.\]

Similarly,
$$dV_{N_1+N_2}=\frac{k_1}{n}|N_2|dV_{N_1}+\frac{k_2}{n}|N_1|dV_{N_2}.$$
Hence, $V_{M_1+M_2}=V_{N_1+N_2}$. By the assumption that the assertion (2) for dimension $n$ holds, it follows that $M_1+M_2=N_1+N_2$, or $M_1+M_2$ and $N_1+N_2$ are relative cylinders. Since $(M_1+M_2)|_{\xi_i}=M_i$ , $(N_1+N_2)|_{\xi_i}=N_i$ and $V_{M_i}=V_{N_i}$, it follows that $M_i=N_i$, or $M_i$ and $N_i$ are relative cylinders by the construction of $M_1+M_2$ and $N_1+N_2$. So the assertion (2) for dimension $k_i$ in $\xi_i$ holds. Therefore the logarithmic Minkowski inequality for dimension $k_i$ in $\xi_i$ is proved by the induction.

Next, we show the assertion  (1) for dimension $n$ holds when $K$ is a cylinder. For any convex body  $L\in\mathcal{K}^n_{os}$, it follows that $L \subseteq L|_{\xi_1}+L|_{\xi_2}$.
Combining this, (\ref{decompsition}), that $h_{L|_{\xi_i}}=h_L$ on $\xi_i$ and $K|_{\xi_i}=K_i$,   the logarithmic Minkowski inequality for dimension $k_i$ in $\xi_i$, that $V(K)=|K_1||K_2|$ and $V(L)\leq|L|_{\xi_1}||L|_{\xi_2}|$, it follows that
\begin{align*}
\int_{\mathbb S^{n-1}}\log\frac{h_L}{h_K}\,dV_K=&\frac{k_1}{n}|K_2|\int_{\mathbb S^{n-1}\cap\xi_1}\log\frac{h_L}{h_K}\,dV_{K_1}+\frac{k_2}{n}|K_1|\int_{\mathbb S^{n-1}\cap\xi_2}\log\frac{h_L}{h_K}\,dV_{K_2}\\
=&\frac{k_1}{n}|K_2|\int_{\mathbb S^{n-1}\cap\xi_1}\log\frac{h_{L|_{\xi_1}}}{h_{K|_{\xi_1}}}\,dV_{K_1}+\frac{k_2}{n}|K_1|\int_{\mathbb S^{n-1}\cap\xi_2}\log\frac{h_{L|_{\xi_2}}}{h_{K|_{\xi_2}}}\,dV_{K_2}\\
=&\frac{k_1}{n}|K_2|\int_{\mathbb S^{n-1}\cap\xi_1}\log\frac{h_{L|_{\xi_1}}}{h_{K_1}}\,dV_{K_1}+\frac{k_2}{n}|K_1|\int_{\mathbb S^{n-1}\cap\xi_2}\log\frac{h_{L|_{\xi_2}}}{h_{K_2}}\,dV_{K_2}\\
\geq&\frac{k_1}{n}|K_2||K_1|\frac{1}{k_1}\log\frac{|L|_{\xi_1}|}{|K_1|}+\frac{k_2}{n}|K_1||K_2|\frac{1}{k_2}\log\frac{|L|_{\xi_2}|}{|K_2|}\\
=&\frac{1}{n}|K_1||K_2|\log\frac{|L|_{\xi_1}||L|_{\xi_2}|}{|K_1||K_2|}\\
\geq&\frac{V(K)}{n}\log\frac{V(L)}{V(K)}.
\end{align*}
Thus, the logarithmic Minkowski inequality for dimension $n$ is proved. Assume that the equality holds. Then $V(L)=|L|_{\xi_1}||L|_{\xi_2}|$. Thus, the inclusion $L\subseteq L|_{\xi_1}+L|_{\xi_2}$ implies that $L=L|_{\xi_1}+L|_{\xi_2}$ is a cylinder. Meanwhile, the equality of the $k_i$-dimensional logarithmic Minkowski inequality for $K_i$ and $L|_{\xi_i}$ holds, which implies that $K_i$ and $L|_{\xi_i}$ are dilatates, or $K_i$ and $L|_{\xi_i}$ are relative cylinders for $i=1,2$. So $K$ and $L$ are relative cylinders.

\textbf{Case 2}. Assume that $K$ is not a  cylinder. Then $V_K$ satisfies the strict subspace concentration inequality. By Theorem 6.3 in Page 845 of \cite{BLYZ}, there exists a convex body $L_0\in\mathcal{K}^n_{os}$ such that $L_0$ is the solution to the extremum problem
$$\inf\left\{\frac{1}{V(K)}\int_{\mathbb S^{n-1}}\log h_Q\,d{V}_K-\frac{1}{n}\log V(Q):Q\in\mathcal{K}^n_{os}\right\}.$$
Moreover, the normalized cone-volume measure $\frac{{V}_{L_0}(\cdot)}{V(L_0)}=\frac{{V}_K(\cdot)}{V(K)}$ by Lemma \ref{variation}. Together with the assertion (2), it follows that $L_0$ and $K$ are dilatates. Let $L_0=\lambda K$, $\lambda>0$. Then for any $L\in\mathcal{K}^n_{os}$,
\begin{align*}
&\frac{1}{V(K)}\int_{\mathbb S^{n-1}}\log h_L\,d{V}_K-\frac{1}{n}\log V(L)\\
\geq &\frac{1}{V(K)}\int_{\mathbb S^{n-1}}\log h_{L_0}\,d{V}_K-\frac{1}{n}\log V(L_0)\\
=&\frac{1}{V(K)}\int_{\mathbb S^{n-1}}(\log\lambda+\log h_K)\,d{V}_K-\frac{1}{n}\log V(K)-\log\lambda\\
=&\frac{1}{V(K)}\int_{\mathbb S^{n-1}}\log h_K\,d{V}_K-\frac{1}{n}\log V(K).
\end{align*}
That is,
$$\frac{1}{V(K)}\int_{\mathbb S^{n-1}}\log\frac{h_L}{h_K}\,d{V}_K\geq\frac{1}{n}\log\frac{V(L)}{V(K)}.$$
The equality holds if and only if $L$ is a solution to the extremum problem, which implies that $L$ and $K$ are dilatates.

Combining the two cases, the assertion (1) for dimension $n$ holds. Therefore, the assertion (2) implies the assertion (1) as desired.
\end{proof}

\section*{Acknowledgement}

We are very grateful to the reviewers for the very thoughtful and careful readings given to the original draft of this paper, and for the very nice suggested improvements.


\bibliographystyle{amsplain}

\end{document}